\theoremstyle{theorem}
\newtheorem{theorem}{Theorem}
\newtheorem{lemma}[theorem]{Lemma}
\newtheorem{proposition}[theorem]{Proposition}
\theoremstyle{definition}
\newtheorem{definition}[theorem]{Definition}
\newtheorem{corollary}[theorem]{Corollary}
\newtheorem{example}[theorem]{Example}
\newcommand{\R} {\mathbf{R}}
    \def\Int{\operatorname{Int}}   
        \def\pl{\partial}   
\title{Average pace and horizontal chords}
\author{Keith Burns, Orit Davidovich and Diana Davis}
\begin{document}
\maketitle


\section{Is there a mile at the average pace?}

On November 16, 2013,  Molly Huddle ran 37:49 for 12 kilometers, a world record for that distance.\footnote{Technically, Huddle's time was a \emph{world best}, since 12km is a non-standard distance.} People applauded this fine performance, but some pointed out that Mary Keitany's world record of 65:50 for the half marathon, which is 21.1 kilometers, is actually \emph{faster} than Huddle's record: Keitany averaged 3:07 per kilometer, while Huddle averaged 3:09 per kilometer \cite{iaaf} \cite{nyrr}. Therefore, Keitany must have run some 12 km subset of the race faster than Huddle $-$ right?

\begin{figure*}[!h] 
\centering
\includegraphics[height=120pt]{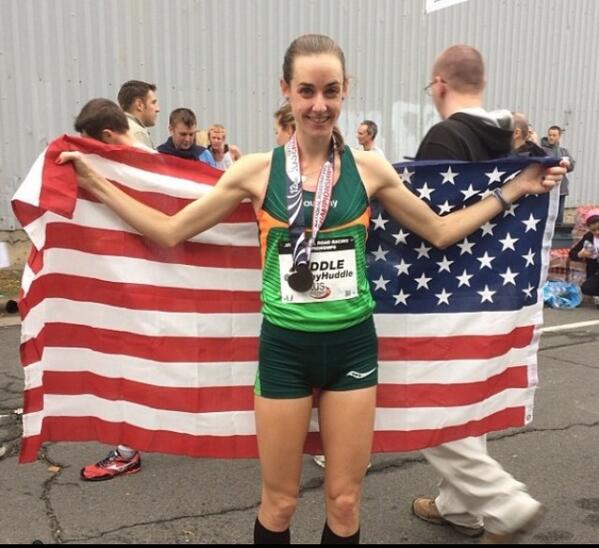} \ \ \ 
\includegraphics[height=120pt]{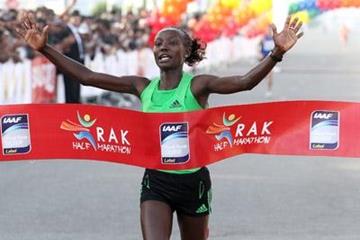}
\begin{quote}\caption*{\small{Molly Huddle (left) and Mary Keitany (right)   after their respective record-breaking runs. \textcopyright Victah Sailer, PhotoRun} } \end{quote}
\vspace{-2em}
\end{figure*}

No! Not necessarily:

\begin{example} \label{fsf}
Suppose that  Keitany ran 27:00 for the first and last 9.1 km, and 11:50 for the middle 2.9 km (Figure \ref{keitany}). Then her total time for the race would still be $2 \times \text{27:00} +  \text{11:50} = \text{65:50}$, but her time for each  12 km subinterval would have been $\text{27:00} + \text{11:50} = \text{38:50}$, much slower than Huddle's record.
\end{example}

\begin{figure}[!h] 
\centering
\vspace{-1em}
\includegraphics[width=300pt]{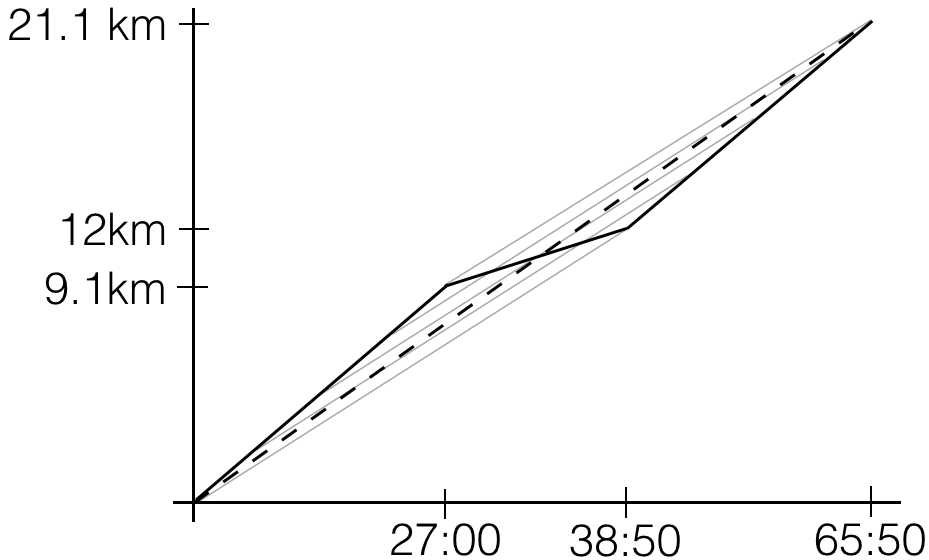}
\begin{quote}\caption{Distance covered as a function of time for a 21.1-km race run in 65:50, at the variable speed of  Example \ref{fsf} (solid), and at a constant speed (dashed). Scale is exaggerated to show the difference. 12km subsets of the race are shown in grey. \label{keitany}} \end{quote}
\vspace{-2em}
\end{figure}
\vspace{-1em}

Geometrically, we can see that every 12-km subinterval is covered in 38:50 by drawing a chord to the solid graph, with a vertical displacement of 12km. (A \emph{chord} is a line segment connecting two points on the graph.) Five examples are shown in grey in Figure \ref{keitany}. Every such chord  has a slope that is less than the slope of the dashed graph, so every 12km subset of the race is covered slower than the dashed line (Keitany's average speed). By construction, every such chord has a horizontal displacement of exactly 38:50.

Motivated by this example, we ask: When must it be true that there is a subset of a race covered in exactly the average speed of the entire race? The surprising answer is: almost never! 

In fact, there must be a subset covered in the average speed {if and only if} the length of the entire race is an integer multiple of the length of the subinterval of interest.
\footnote{For a similar discussion of this result and a related problem, see \cite{cs} and \cite{memory}.} 
This result shows, for example, that if you ran a 3-mile race at an average pace of 6:00 per mile, there must have been some mile that you ran in 6:00, but if you ran a 3.1-mile (5km) race at an average pace of 6:00, there need not have been any mile that you ran in exactly 6:00.

Here is an easy counterexample: If you run a 1.5-mile race in 15 minutes, averaging 10 minutes per mile, run the first and last half mile at a constant pace in 4 minutes and the middle half mile at a constant pace in 7 minutes. Then the first mile is covered in 11 minutes, the last mile is covered in 11 minutes, and sliding the endpoints of the mile we are looking at exchanges one fast part for another, so \emph{every} mile is covered in 11 minutes. 

Going the other way, if you run the first and last half mile in 6 minutes each, and the middle half mile in 3 minutes, your pace for every mile subset would be a speedy 9 minutes, but your average pace only 10 minutes per mile. See Figure \ref{smalls}(a) for position functions illustrating these possibilities.

Showing that an integer-length race has a mile at exactly the average speed is an application of the Intermediate Value Theorem:

\begin{proposition} \label{partition}
If the race distance is a whole number of miles, then some mile must be covered at exactly the average pace.
\end{proposition}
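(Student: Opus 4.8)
The plan is to parametrize the run by distance rather than by time. Writing $n$ for the (integer) race distance in miles and $T$ for the total elapsed time, let $f(x)$ denote the time at which the runner has covered distance $x$, so that $f$ is continuous on $[0,n]$ with $f(0)=0$ and $f(n)=T$; the average pace is then $T/n$ per mile. The quantity I want to control is the time taken to run the mile beginning at distance $x$, namely
\[
 h(x) = f(x+1) - f(x), \qquad x \in [0, n-1].
\]
The goal is to produce a single $x$ with $h(x) = T/n$, which says precisely that the mile from $x$ to $x+1$ is run at the average pace.

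The key observation — and the only place the hypothesis that $n$ is a \emph{whole} number enters — is that the $n$ consecutive miles $[0,1], [1,2], \ldots, [n-1,n]$ exactly tile the race, so their mile-times telescope:
\[
 \sum_{k=0}^{n-1} h(k) = \sum_{k=0}^{n-1}\bigl(f(k+1)-f(k)\bigr) = f(n) - f(0) = T.
\]
Dividing by $n$, the average of the $n$ numbers $h(0), h(1), \ldots, h(n-1)$ is exactly $T/n$. A finite list cannot lie entirely above or entirely below its own mean, so there are integers $i$ and $j$ in $\{0, \ldots, n-1\}$ with $h(i) \le T/n \le h(j)$.

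It then remains to upgrade this bracketing at integer points to an exact hit. Since $f$ is continuous, so is $h$, and on the interval between $i$ and $j$ the function $h$ takes a value at most $T/n$ (at $i$) and a value at least $T/n$ (at $j$). The Intermediate Value Theorem therefore yields a point $x$ with $h(x) = T/n$, completing the argument. There is no serious obstacle here — the excerpt already advertises the result as an easy consequence of the Intermediate Value Theorem — so the only point needing care is the setup: I must know that elapsed time is a genuine continuous function of distance, which holds as long as the runner keeps moving forward so that distance is strictly increasing in time, and I should record that it is precisely the telescoping over whole miles that forces the average of the $h(k)$ to equal $T/n$. This last remark also foreshadows why integrality of $n$ cannot be dropped: if the race length were not a whole number of miles, the miles would no longer tile the course and the averaging identity would fail, which is exactly the freedom Example~\ref{fsf} exploits.
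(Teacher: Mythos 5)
Your proof is correct, but it is the mirror image of the paper's argument: you partition the \emph{course} into $n$ equal miles and track the time $h(x)=f(x+1)-f(x)$ for a sliding mile, whereas the paper partitions the \emph{clock} into $n$ equal time intervals of length $T/n$ and tracks the distance covered in a sliding time window. The logical skeleton is the same in both (an averaging/pigeonhole step to find one window below and one above the mean, then the Intermediate Value Theorem on the sliding window), and your telescoping identity $\sum_{k=0}^{n-1}h(k)=T$ is a cleaner way to phrase the paper's ``if every interval covered more than a mile the total would exceed $n$ miles'' step. The trade-off is the one you yourself flag: your parametrization needs elapsed time to be a continuous function of distance, i.e.\ the runner must move strictly forward --- if she stands still for an interval, the inverse function $f$ has a jump and $h$ need not satisfy the IVT hypotheses. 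The paper's time-parametrization avoids this entirely: distance covered in a time window is continuous in the window's endpoints no matter how the runner behaves (the paper's footnote notes that an integrable speed function suffices), so it proves the statement under strictly weaker assumptions. If you wanted to repair your version without changing parametrization, you could define $f(x)$ as the \emph{first} time the runner reaches distance $x$ and argue around the discontinuities, but it is simpler to swap the roles of time and distance as the paper does.
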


\begin{proof}
Let the total time for the race be $T$, and let the race distance be $n$ miles, with $n\in\mathbf{N}$. Partition the race into $n$  time subintervals of length $T/n$. 



If more than one mile is covered in every subinterval, then the total distance covered is more than $n$ miles, which is impossible. Similarly, if less than one mile is covered in every subinterval, then the total distance covered is less than $n$ miles, which is impossible. Thus there must be some subinterval in which at least a mile was covered, and some subinterval in which at most a mile was covered. Since the distance traveled by the runner in a time interval depends continuously on the start and end points of the interval,  
by the Intermediate Value Theorem there must be some intermediary $T/n$-length subinterval in which exactly  a mile was covered, establishing the result.\footnote{D.D. thanks Jon Chaika for a productive conversation about this result.}
\end{proof}

When I have told people about this problem, they usually suggest applying the Intermediate Value Theorem to the whole thing, with reasoning like: ``If you didn't run the race at a perfectly steady pace, then some part was faster than your average pace, and some part was slower, so by the Intermediate Value Theorem, you have to have a mile in between at the average pace.'' The problem with this reasoning is that, as we've seen in our examples, it's possible to arrange the fast parts and slow parts in such a way that \emph{every} mile (or 12km subset, or whatever your sub-interval of interest) is slower than the average pace. 

In the next section, we will show that it is possible to construct such a ``paradoxical race plan'' for \emph{any} non-integer distance.

\section{The Universal Chord Theorem}

The running problem is equivalent to an old and beautiful result called the \emph{Universal Chord Theorem}.  In the rest of the paper, we will show the equivalence of the two problems, explain the theorem, and explore the ideas of horizontal chord sets in general.

First, we'll translate the running problem to an equivalent problem about horizontal chords, which Paul L\'evy solved in 1934 \cite{levy}:


\begin{proposition}\label{hc}
For any positive non-integer $L>1$, there is a continuous function $f:L\to\mathbf{R}$ with $f(0)=f(L)=0$ whose graph has \emph{no} unit-length horizontal  chord.
\end{proposition}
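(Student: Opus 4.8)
The plan is to recast ``no unit horizontal chord'' as a statement about the sign of a single auxiliary function, and then to write down an explicit $f$ for which that function never vanishes. A unit-length horizontal chord of the graph of $f$ is exactly a pair of points $x,\,x+1\in[0,L]$ with $f(x+1)=f(x)$; equivalently, it is a zero of the \emph{chord function}
\[
D(x) := f(x+1)-f(x), \qquad x\in[0,L-1].
\]
So it suffices to build a continuous $f$ with $f(0)=f(L)=0$ for which $D$ is nowhere zero on $[0,L-1]$. The cleanest way to guarantee this is to arrange for $D$ to be a \emph{nonzero constant}, so that it cannot change sign and in particular cannot hit $0$.

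The key idea I would use is that a function of the form ``periodic of period $1$ plus a linear tilt'' has a constant chord function. Concretely, I would try $f(x)=p(x)-cx$, where $p$ is continuous with period $1$ and $p(0)=0$, and $c$ is a constant to be determined. Because $p(x+1)=p(x)$, the periodic part cancels in the difference, giving $D(x)=p(x+1)-p(x)-c=-c$ for every $x$, which is nonzero precisely when $c\neq 0$. The left endpoint is automatic, $f(0)=p(0)=0$, and the right endpoint imposes the single relation $f(L)=p(L)-cL=0$, i.e.\ $c=p(L)/L$. A convenient explicit choice is $p(x)=\sin^2(\pi x)$, which has period $1$ and satisfies $p(0)=0$; this yields $c=\sin^2(\pi L)/L$ and the candidate
\[
f(x)=\sin^2(\pi x)-\frac{\sin^2(\pi L)}{L}\,x .
\]

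It then remains only to check that $c\neq 0$, and this is exactly the point where the non-integer hypothesis enters — and, I expect, the one step that deserves care. We have $c=\sin^2(\pi L)/L\neq 0$ if and only if $\sin(\pi L)\neq 0$, i.e.\ if and only if $L\notin\mathbf{Z}$. Thus for non-integer $L$ the chord function $D\equiv -c$ is a nonzero constant, so $f$ has no unit horizontal chord while still satisfying $f(0)=f(L)=0$ by construction. (Reassuringly, when $L$ is an integer the endpoint condition forces $c=0$ and the construction degenerates, consistent with Proposition~\ref{partition}, which guarantees a chord in that case.) The genuine obstacle here is conceptual rather than computational: recognizing the periodic-plus-linear ansatz that collapses $D$ to a constant. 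Once that is in hand, the verification reduces to the one-line identity $\sin^2(\pi(x+1))=\sin^2(\pi x)$.
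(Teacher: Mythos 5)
Your proof is correct, but it takes a genuinely different route from the paper's. The paper derives Proposition \ref{hc} as a corollary of Hopf's Theorem \ref{hopfthmii}: it chooses a closed set $S\subset[0,L]$ not containing $1$ whose complement is open and additive, and takes Hopf's sawtooth function $h_S$, whose horizontal chord set is exactly $S$. Your construction is instead L\'evy's original counterexample \cite{levy} --- a continuous period-$1$ function $p$ with $p(0)=0$, minus the linear tilt $\frac{p(L)}{L}x$ needed to force $f(L)=0$ --- which the paper quotes in its historical discussion but does not use as its proof. The trade-off is clear. Your argument is shorter and fully self-contained: the periodicity of $p$ collapses the chord function $D(x)=f(x+1)-f(x)$ to the constant $-p(L)/L$, which is nonzero precisely because $L\notin\mathbf{Z}$ (with $p(x)=\sin^2(\pi x)$ this is the one-line identity $\sin^2(\pi(x+1))=\sin^2(\pi x)$), and your $f$ is $C^\infty$ for free, whereas the paper's $h_S$ is only piecewise linear and needs Oxtoby's smoothing \cite{oxtoby} or the modification in Remark \ref{improve} to become smooth. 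What your approach gives up is control: it guarantees only that chords of length $1$ (and, by the same computation, of any integer length) are absent, while Hopf's machinery realizes \emph{any} admissible set $S$ as the exact chord set --- that generality is the real subject of the paper, and Proposition \ref{hc} falls out of it as a special case. For the proposition as stated, your route is the more economical one.
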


The equivalence of the running problem with Proposition \ref{hc} is illustrated in Figure \ref{smalls}, and works as follows:


\begin{figure}[!h] 
\centering
\includegraphics[width=175pt]{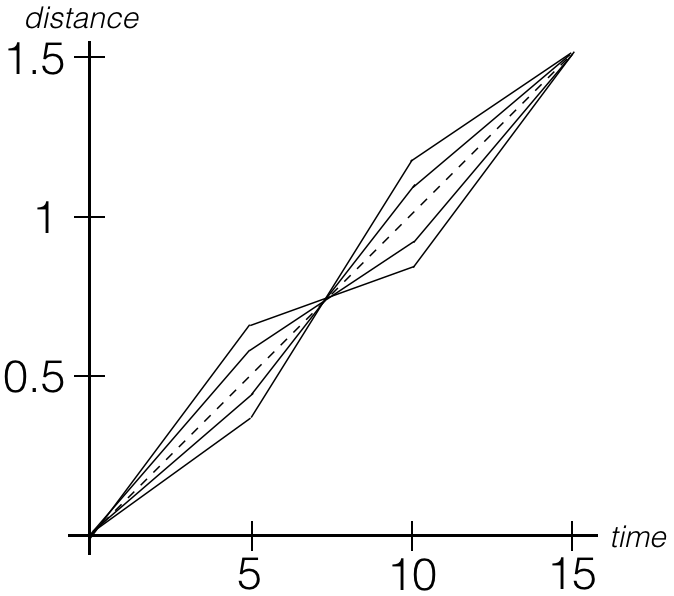} \ \ \ 
\includegraphics[width=175pt]{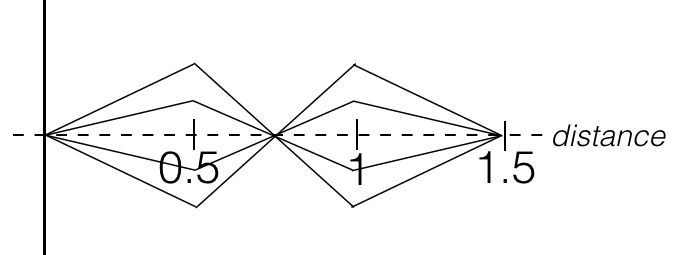}
\begin{quote}\caption{(a) Position functions for a 1.5-mile race averaging 10:00 pace with no 1-mile subset in exactly 10:00 (b) Functions from $[0,1.5]\to\mathbf{R}$ with no unit-length horizontal chord. The functions in (b) are vertical shears of those in (a). Each function is composed of $3$ segments. \label{smalls}} \end{quote}
\vspace{-2.5em}
\end{figure}

In the running problem, we want to construct a continuous position function for an $L$-mile race covered in $T$ minutes, so that no 1-mile subset of the race is covered in exactly T/L minutes. This means that we want a continuous position function \mbox{$f:[0,T]\to[0,L]$} such that $f(0)=0$ and $f(T)=L$, and no chord of the function simultaneously has a horizontal displacement of T/L  minutes and a vertical displacement of 1 mile. Figure \ref{smalls}(a) shows an example of solutions to this running problem for $L=1.5$ and $T=$ 15:00.

We can vertically shear the entire problem, so that we instead wish to find a continuous function with $f(0)=0$ and $f(T)=0$, and with no \emph{horizontal} chord of width T/L. Finally, we can re-scale the horizontal axis so that T/L is one unit. Then our task is to find a continuous function $f:[0,L]\to\mathbf{R}$ such that $f(0)=0$ and $f(L)=0$, with no unit-length horizontal chord, which is exactly the statement of Proposition \ref{hc}. All of these transformations are invertible, so the two constructions are equivalent. Figure \ref{smalls}(b) shows the equivalent solution to the function problem, which has $L=1.5$.


An example of a continuous function on $[0,L]$ with endpoints at $0$ and no unit-length horizontal chord is shown in Figure \ref{sawtooth}, with $L=4.4$. We graph $f(x)$ and $f(x-1)$, and carefully construct $f$ (thick) to avoid $f(x-1)$ (thin), so that $f$ has no unit horizontal chord.

\begin{figure}[!h] 
\centering
\includegraphics[width=250pt]{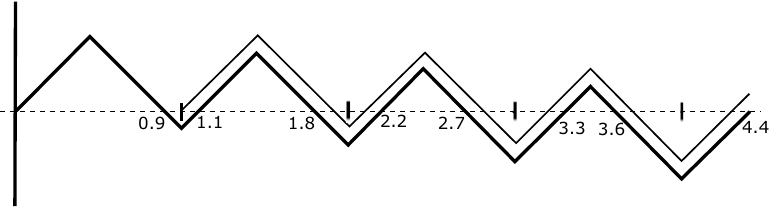}
\begin{quote}\caption{The thick function $f(x)$ has $f(0)=f(4.4)=0$. The thin function is $f(x-1)$. Since $f(x)$ and $f(x-1)$ do not intersect, $f$ has no unit horizontal chord. \label{sawtooth}} \end{quote}
\vspace{-2.5em}
\end{figure}

The function in Figure \ref{sawtooth} would also work for \mbox{$L=1.1, 1.8, 2.2, 2.7, 3.3, 3.6$} or {$4.4$}, all of the intersections with the $x$-axis.

The \textit{Universal Chord Theorem}, as it is known, has been repeatedly reworked over the past 200 years.  One statement reads (\cite{boas}, \cite{levit}):  
\begin{theorem}[Universal Chord Theorem]
For a given length $h$, the necessary and sufficient condition for a continuous function \mbox{$f:[0,L] \to\mathbf{R}$} with \mbox{$f(0)=f(L)$} to have a horizontal chord of length $h$ is that $h=L/m$ for some positive integer $m$.
\end{theorem}
 In his book \cite{boas}, Boas gave a history of the problem, tracing it back as early as Andr\'e-Marie Amp\`ere, who in 1806 proved the positive part of the assertion (see \cite{ampere}).  The modern history of the theorem begins with L\'evy, who in 1934 provided a complete proof (see \cite{levy}).  L\'evy's negative part of the assertion is proved using a counter-example function which is smooth:  For $h$ not an integer reciprocal of $L$, and a nontrivial smooth periodic function $\varphi(x)$, with period $h$, such that $\varphi(0)=0$, the function $f(x) = \varphi(x) - \frac{x}{L} \varphi(L) : [0,L] \rightarrow \mathbf{R}$ has $f(0) = 0 = f(L)$, and no horizontal chords of length $h$. In 1963, Levit 
showed that if $f$ is continuous on $[0,L]$ and changes sign $n$ times in that interval, and $f(0)=f(L)=0$, then $f$ has horizontal chords of every length between $0$ and $\frac{L}{\left \lfloor{(n+3)/2}\right \rfloor}$ (see \cite{levit}).



Assuming the Universal Chord Theorem, we can give a proof of the result equivalent to our running problem: 

\begin{proof}[Proof of Proposition \ref{hc}]
A continuous function $f:[0,L]\to\mathbf{R}$ \emph{must have} a horizontal chord of length $1$ if and only if $L$ is an integer. For a positive non-integer $L>1$, this condition is not satisfied, so such a function exists with no horizontal chord of length $1$.
\end{proof}

Given a candidate function $f(x)$, it is easy to check whether it has a horizontal chord of length $1$ or not: graph $f(x)$ and $f(x-1)$ on the same axes, and see whether they intersect, as in Figure \ref{sawtooth}. The more difficult task is to construct such a function in the first place! In the next section, we explore the work of Hopf, who solved an amazing generalization of the problem.

\section{Horizontal chord sets}\label{sec:hcs}

So far, we have seen functions that have  \emph{no unit-length} horizontal chord. In 1937, Heinz Hopf came along and completely solved the problem of what horizontal chords a continuous function can have \cite{hopf}. In the remainder of the paper, we will discuss this fascinating result.

\begin{definition}
For a function $f(x)$, its \emph{horizontal chord set} is the set of lengths horizontally connecting two points on the graph, i.e.
\[
S(f) = \{\ell\in\R: \text{ there exists } x\in\R \text{ with } f(x) = f(x+\ell)\}. 
\]
\end{definition}

Our running question above asked whether it is possible for a continuous function's horizontal chord set to exclude the number $1$. 
%
Hopf solved this problem in full generality:

\begin{theorem}[Hopf]\label{hopftheorem}
A given set $S \subset [0,\infty)$ is the horizontal chord set of some continuous function if and only if its complement $S^*$ is \emph{open} and \emph{additive}.
\end{theorem}

\begin{definition} \label{additivedef}
A set $X$ is \emph{additive} if $a,b\in X \implies a+b\in X$.
\end{definition}



Given any such set $S$ whose complement is open and additive, Hopf showed how to construct a continuous function $h_S$ whose horizontal chord set is exactly $S$. These functions look like our example in Figure \ref{sawtooth}. 

The implication that the horizontal chord set of a continuous function must be closed, and its complement in $(0,\infty)$ additive, is discussed in the Appendix. Here we will restrict attention to the opposite inference. More specifically, we will discuss the construction of a continuous function given a closed set $S \subseteq [0,\infty)$ whose complement is additive.

Since open additive sets are the key to understanding horizontal chord sets, you might wonder, what do open additive sets look like? 

\begin{example}\label{fourpointfour}
Let's look at one example, from the function in Figure \ref{sawtooth}. For this example, the horizontal chord set is
$$S=[0,0.9]\cup[1.1,1.8]\cup[2.2,2.7]\cup[3.3,3.6]\cup\{4.4\}.$$  
The complement of $S$ is the set
$$S^*=(0.9,1.1)\cup(1.8,2.2)\cup(2.7,3.3)\cup(3.6,4.4)\cup(4.4,\infty).$$ 

The reader will note, for example, that the interval $[0,0.9]$ covers all chord lengths of chords connecting endpoints lying on slopes belonging to a single `bump'. However, it also covers all chord lengths of chords connecting endpoints lying on the downward slopes of two consecutive `bumps'. The interval $[1.1,1.8]$, on the other hand, covers all chord lengths of chords connecting endpoints lying on the upward slopes of two consecutive `bumps', and so on it goes until we get to 4.4 $-$ the length of the longest chord connecting the outermost slopes.

One can check that $S^*$ is additive. 
 Notice that intervals of $S$ get shorter, while intervals of $S^*$ get longer. It turns out that this is always the case for additive sets that contain an interval. For example, our set $S^*$ contains the interval $(0.9,1.1)$. Since $S^*$ is additive, it must contain the double, triple, etc. of each point in this interval, so $S^*$ contains $(1.8,2.2), (2.7,3.3),$ $(3.6,4.4),(4.5,5.5),$ $(5.4,6.6)$ $-$ at which point our intervals have started to overlap, so $S^*$ contains a ray to infinity: Since $S^*$ contains $1$ (in the first interval) and an interval of length $1$ (from $4.5$ to $5.5$), and $S^*$ is additive, $S^*$ must contain every number above $4.5$. 
 \end{example}
 
In the same way we can see that no matter how small the first open interval contained in an open additive set is, the multiples of this set grow and eventually overlap, so an open additive set $S^*$ always contains a final interval of the form $(p,\infty)$. Additionally, this shows that its complement $S$ is bounded.

\section{Constructing the function}

Now we will show, for any set $S\subset[0,\infty)$ whose complement is open and additive, how to construct a continuous function $h_S$ whose horizontal chord set is $S$.

 \begin{definition} Consider a set $S\subset[0,\infty)$ whose complement is open and additive.
Let $L=\sup S$. (The discussion above explains why $S$ is bounded.)  For $x \in [0,L]$, define 
\[
a(x) = \sup \{ y \in \pl S: y \leq x\} \text{ \ \ \ and \ \ \ } b(x) = \inf \{ y \in \pl S: y \geq x\}
\]
\end{definition}

The functions $a(x)$ and $b(x)$ pick out the endpoints of the interval containing $x$. If $x \in \partial S$, we have $a(x) = x = b(x)$; otherwise, clearly \mbox{$a(x) < x < b(x)$}. If $x \in \Int S = S \setminus \partial S$, then $(a(x),b(x)) \subseteq \Int S$ is the maximal open interval in $\Int S$ containing $x$. Similarly, If $x \in S^*$, then $(a(x),b(x)) \subseteq S^*$ is the maximal open interval in $S^*$ containing $x$.


 \begin{definition} For $x$ as above, define $\alpha(x) = x - a(x)$ and $\beta(x) = b(x) - x$.  
\end{definition}

The functions $\alpha(x)$ and $\beta(x)$ give the distances from a given $x \in [0,L]$ to the boundaries of the maximal open interval, either in $S$ or in $S^*$, that contains it (with the exception of $x \in \partial S$, in which case $\alpha(x) = 0 = \beta(x)$).  
%
%
These functions $\alpha$ and $\beta$ are \emph{not} included in Hopf's original construction; they are an innovation of the current authors to streamline the proof.

Using our notation, Hopf's definition of $h_S(x)$ then becomes
\begin{equation}\label{h_S}
h_S(x) = \begin{cases}  \hphantom{-}0 &\quad\text{if $x \in \partial S$,}\\
                         \hphantom{-} \min(\alpha(x),\beta(x)) &\quad\text{if $x \in \Int  S$,}\\ 
                         -\min(\alpha(x),\beta(x)) &\quad\text{if $x \in S^*$.}
                         \end{cases}
\end{equation}
 
For the set $S$ in Example \ref{fourpointfour}, $h_S$ turns out to be exactly the function in Figure \ref{sawtooth}. Here is how it works: Let us color the intervals in $R^+$ based on whether they are contained in $S$ (black) or its complement $S^*$ (grey). Let us focus attention for a moment on the set $[0,0.9] \subseteq S$ in black. For every $x \in [0,0.9]$, one has $\alpha(x) = x$ and $\beta(x) = 0.9 - x$. As $x$ ranges between 0 and 0.45, $h_S(x) = x$, while as $x$ ranges between $0.45$ and $0.9, h_S(x) = 0.9 - x$. At the midpoint $x = 0.45$, the two lines of  $h_S(x)$ come together to make a triangle facing up over the interval $[0,0.9] \subseteq S$. The formula in (1) results in triangles facing up over each interval in $S$ and triangles facing down under each interval in $S^*$.


\noindent\includegraphics[width=\textwidth]{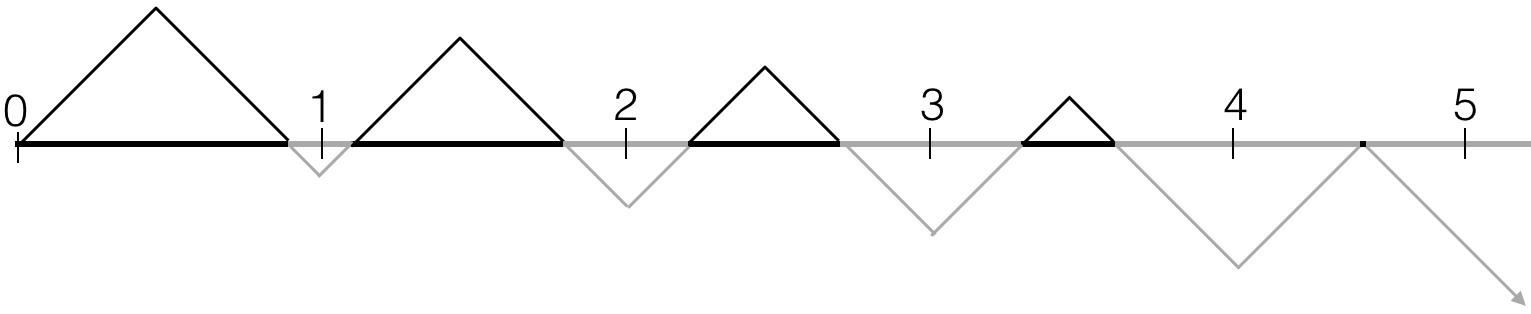}

Assuming Hopf's Theorem \ref{hopftheorem}, we can now give an alternative proof to Levy's Proposition \ref{hc}, which, as we discussed, is equivalent to our running problem:

\begin{proof}[Proof of Proposition \ref{hc}]
Recall that for any non-integer $L > 1$ we need to reproduce a continuous $f: [0,L] \rightarrow R$ with $f(0) = f(L) = 0$ such that $f$ has no unit-length chord. To put it in Hopf's terminology, $1 \not \in S(f)$ but $L \in S(f)$. Consider $\epsilon > 0$ smaller than $1/(n_L+1)$, the distance between $L$ and its nearest integer neighbor $n_L$. Consider the minimal open, additive $A \subseteq (0,\infty)$ that contains $(1-\epsilon,1+\epsilon)$, which would have to be \mbox{$\cup_{n=1}^{\infty} (n-n\epsilon,n+n\epsilon)$}. Consider the closed, bounded $S = [0,L] \setminus A$. Its complement, \mbox{$S^* = (0,\infty) \setminus S = A \cup (L,\infty)$}, is open and additive as well. Clearly, $1 \not \in S$ and $0 \in \partial S$. We are almost ready to invoke Hopf's construction. The function $f=h_S$ defined in the interval $[0,L]$ satisfies all we need. However, we should make sure $L \in S$ so that $0 = f(0) = f(L)$. However, our choice of $\epsilon$ was designed to make sure the two open intervals in $A$ containing the two integers nearest $L$ do not include $L$ itself.
%

Then $h_S(x)$ is a function $[0,L]\to\mathbf{R}$ with no unit horizontal chord.
\end{proof}

\section{Smoothing out the (sheared) position function}
 
There is no special reason why $h_S$ must be piecewise-linear. Variations on $h_S(x)$ would work equally well, for example $-h_S(x)$, or a function that connects successive $x$-intercepts with semicircles instead of triangles.   In fact, we can replace the function $\min(\alpha(x),\beta(x))$ in the definition \eqref{h_S} of $h_S$ with any function
$F(\alpha(x),\beta(x))$ that has the properties 
\begin{enumerate}
\item $F(\alpha,\beta) = 0$ if $\alpha = 0$ or $\beta=0$; \label{firstcondition}
\item $F(\alpha',\beta') > F(\alpha,\beta)$ if $\alpha' > \alpha>0$ and $\beta' > \beta>0$, \label{secondcondition}
\end{enumerate}
and all of the results and proofs still go through. Condition \eqref{firstcondition} makes sure that $h_S(x) = F(\alpha(x),\beta(x)) = 0$ for every $x \in \partial S$, while condition \eqref{secondcondition} implies that the `bumps' over the (finitely many) disconnected closed intervals that make up $S$ get smaller as the intervals themselves shrink in length to $0$.

John Oxtoby, in a survey article in the \emph{Monthly} in 1972, improved Hopf's result by constructing a $C^\infty$ function that does the same job as $h_S(x)$, but smoothly \mbox{(\cite{oxtoby}, Theorem 2)}. For our running application, we indeed might want our function to be smooth. Oxtoby's function is a variation of Hopf's function, and takes two pages to define, but we can give a ``smoother'' proof here using our definitions of $\alpha$ and $\beta$.

We create a smooth function with horizontal chord set $S$ as follows. We let $F(\alpha,\beta)= \phi(\alpha\beta)$, where $\phi$ is a $C^\infty$ function that is strictly increasing  on $(0,\infty)$ and has $\phi^{(n)}(0) = 0$ for all $n \geq 0$.  Then at a distance $x$ along an interval in $S$ or $S^*$ of length $a$, $\phi(\alpha\beta) = \phi(x(a-x))$.

So our smooth function with horizontal chord set $S$ is:

\begin{equation*}\label{f_S}
f_S(x) = \begin{cases}  \hphantom{-}0 &\quad\text{if $x \in \partial S$,}\\
                         \hphantom{-} \phi(\alpha(x),\beta(x)) &\quad\text{if $x \in \Int  S$,}\\ 
                         -\phi(\alpha(x),\beta(x)) &\quad\text{if $x \in S^*$.}
                         \end{cases}
\end{equation*}

\newpage

\section{A cautionary tale about continuity}

For all of the functions we discuss, we assume continuity. It turns out that if we don't assume that the function is continuous, \emph{any} set containing $0$ is the horizontal chord set of some function; openness and additivity of the complement are not required.

\begin{theorem}
Given $L\in\mathbf{R}^+$, suppose that $S\subset [0,L]$ and $0\in S$. Then there is a function $f:[0,L]\to\mathbf{R}$ whose horizontal chord set is $S$.
\end{theorem}

\begin{proof}
We define $f:[0,L]\to\mathbf{R}$ so that for every $y$ in the range of $f$, $f^{-1}(\{y\})$ contains either one or two elements. If it contains two elements, then those two elements will differ by some $s\in S$, thus ensuring that there is a horizontal chord of length $s$. If it contains one element, then it does not contribute to a horizontal chord and is essentially ``thrown away.''

We define $f$ by choosing its values one by one. If $L\in S$, then let \mbox{$f(0) = f(L) = 0$}. Now let $\lambda$ be the cardinality of $S\cap (0,L)$, and let $\{x_\alpha : \alpha < \lambda\}$ be an enumeration of the elements of $S\cap (0,L)$. By transfinite recursion, for each $\alpha < \lambda$, choose $x\in[0,L-x_\alpha]$ such that neither $f(x)$ nor $f(x+x_\alpha)$ has yet been defined, and choose a value $y>0$ that has not yet been used as a value of $f$, and let $f(x) = f(x+x_\alpha) = y$. At any stage in this construction, fewer than $2^{\aleph_0}$ values will have been used, so there will always be acceptable choices for $x$ and $y$. At the end of this process, if there are still points in $[0,L]$ where the value of $f$ has not yet been defined, map them to distinct negative numbers, for example mapping each remaining $x$ to $-x$.
\end{proof}

Thanks to an anonymous referee for pointing this out, and for giving the proof above.

\newpage

\newpage

\section{Appendix: Proofs of Hopf's results}

In this section, we give Hopf's proofs of his result, because they are published in German, and are short. He states it as two different theorems, one for each direction of the implication given in Theorem \ref{hopftheorem}. Hopf proves the result not only for the graph of a function $f(x)$, but for a continuum:

\begin{definition}
A \emph{continuum} is a nonempty connected compact set. For a continuum $K$, let $S(K)$ be the set of lengths of horizontal chords in $K$: $s\in S(K)$ if and only if $s\geq 0$, and there are points $x$ and $x'$ in $K$ such that $x'-x=(s,0)$. Let $S^*(K)$ be the set of positive real numbers that are not in $S(K)$.
\end{definition}


Note that $S(K)$ is closed, so $S^*(K)$ is open.

\begin{theorem}[Hopf, Theorem I]\label{hopfthmi}
$S^*(K)$ is additive.
\end{theorem}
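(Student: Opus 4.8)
The plan is to translate everything into a statement about horizontal translates of $K$ and then reduce, as in Lemma \ref{equiv}, to the graph of a continuous function, where an extremal argument settles the matter. First I would record the elementary reformulation: writing $K_c := K + (c,0)$ for the horizontal translate of $K$ by $c$, a number $c>0$ lies in $S(K)$ exactly when $K \cap K_c \neq \emptyset$ (the common point is the right endpoint of a chord of length $c$). Thus $S^*(K)$ being additive is the implication
\[ K \cap K_a = \emptyset \ \text{and}\ K \cap K_b = \emptyset \ \Longrightarrow\ K \cap K_{a+b} = \emptyset, \]
or, contrapositively, the assertion that a horizontal chord of length $a+b$ forces one of length $a$ or of length $b$.

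I would then treat the representative case in which $K$ is the graph of a continuous $f:[0,L]\to\mathbf{R}$, the situation of Proposition \ref{hc}. Here $c \in S^*(K)$ means the continuous function $h_c(x) := f(x+c)-f(x)$ has no zero on $[0,L-c]$, hence has constant sign. The key step is to show that, whenever $a,b\in S^*(K)$ with $a+b\le L$, the signs of $h_a$ and $h_b$ must agree. Suppose instead $h_a>0$ and $h_b<0$, and let $x_*$ minimize $f$ on $[0,L]$. From $h_b<0$ we would get $f(x_*+b)<f(x_*)$ unless $x_*>L-b$, so $x_*>L-b$; from $h_a>0$ we would get $f(x_*)>f(x_*-a)$ unless $x_*<a$, so $x_*<a$. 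Together these force $a+b>L$, a contradiction (the case $h_a<0,h_b>0$ follows by replacing $f$ with $-f$). Once the signs agree, say both positive, the identity $h_{a+b}(x)=h_a(x)+h_b(x+a)$ gives $h_{a+b}>0$ on $[0,L-a-b]$, so $a+b\in S^*(K)$; and if $a+b>L$ there is no chord that long at all. This establishes additivity for graphs.

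The main obstacle is upgrading this to an arbitrary continuum $K$, where the clean ``constant sign'' dichotomy is unavailable: $K$ need not be a graph, so on a given vertical line the points of $K$ and of $K_a$ can interleave and there is no global sense in which $K$ lies above or below $K_a$. To handle this I would return to the translate formulation and the three sets $K_0=K$, $K_1=K+(a,0)$, $K_2=K+(a+b,0)$, whose consecutive members are disjoint by hypothesis, and use the connectedness of $K$ (rather than an extremal point) to manufacture the required coincidence: a point of $K_0\cap K_2$ would sit on a horizontal line carrying points of $K_1$ on either side of it but not $K_1$ itself, and the aim is to show that the connectedness of the translates makes such a configuration impossible without also producing a point of $K_0\cap K_1$ or of $K_1\cap K_2$, i.e.\ a chord of length $a$ or of length $b$. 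Supplying this topological substitute for the extremal-point step is the crux, and is precisely the content of Hopf's original argument.
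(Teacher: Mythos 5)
Your argument for the special case where $K$ is the graph of a continuous function $f:[0,L]\to\mathbf{R}$ is correct and genuinely different from the paper's proof: the observation that $h_a(x)=f(x+a)-f(x)$ and $h_b$ have constant sign, the extremal-point argument forcing those signs to agree, and the identity $h_{a+b}(x)=h_a(x)+h_b(x+a)$ make a clean real-variable proof using nothing beyond the Intermediate Value Theorem. Since the paper itself remarks (in the corollary following Theorem \ref{hopfthmi}) that only the graph case is needed for the running application, your argument would suffice for that corollary.

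However, Theorem \ref{hopfthmi} is stated for an arbitrary continuum $K$, and there your proposal has a genuine gap, which you acknowledge: you defer the crux to ``Hopf's original argument'' without supplying it. Moreover, the configuration you propose to exploit is not by itself contradictory. If $p\in K_0\cap K_{a+b}$, then indeed $p-(b,0)$ and $p+(a,0)$ lie in $K_a$ while $p\notin K_a$; but a connected set can perfectly well have points on both sides of $p$ along its horizontal line while avoiding $p$ itself (it can simply go around), so no contradiction can be extracted from a single horizontal line. What is actually needed is a global separation argument, and this is what the paper does: by compactness choose $\epsilon>0$ with $K_0^\epsilon\cap K_a^\epsilon=\emptyset$ and $K_a^\epsilon\cap K_{a+b}^\epsilon=\emptyset$; inside the open connected set $K_a^\epsilon$ find a simple arc $\alpha$ joining the bottom edge to the top edge of the strip $\mathbb{R}\times[y_{\min},y_{\max}]$, meeting the strip's boundary only at its endpoints; note that $\alpha$ divides the strip into a left half and a right half. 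Since $K_0$ and $K_{a+b}$ are connected, are disjoint from $\alpha$, and contain points forced into the left half (those of $K_0$ realizing $x_{\min}$) and the right half (those of $K_{a+b}$ with $x$-coordinate $x_{\max}+a+b$) respectively, each lies entirely in one half, whence $K_0\cap K_{a+b}=\emptyset$. This Jordan-type separation step is precisely the ``topological substitute for the extremal point'' that you identify as missing; without it (or something equivalent), the theorem in its stated generality remains unproved.
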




\begin{proof}
Let $K_s$ denote the set obtained by adding $(s,0)$ to all points of $K$; note that $K_0 = K$. With this notation, Theorem \ref{hopfthmi} says: If \mbox{$K_0 \cap K_a = \emptyset$} and \mbox{$K_0 \cap K_b= \emptyset$} (or equivalently \mbox{$K_a \cap K_{a+b} = \emptyset$}), then \mbox{$K _0\cap K_{a+b} = \emptyset$}. 

Let $K_s^\epsilon$ denote the $\epsilon$-neighborhood of $K_s$.
Since the sets  $K_s$ are all compact, we  can choose $\epsilon$ small enough so that \mbox{$K_0^\epsilon \cap K_a^\epsilon = \emptyset$} and \mbox{$K_a^\epsilon \cap K_{a+b}^\epsilon = \emptyset$} (Figure \ref{part1}).

\begin{figure}[!h] 
\centering
\includegraphics[width=\textwidth]{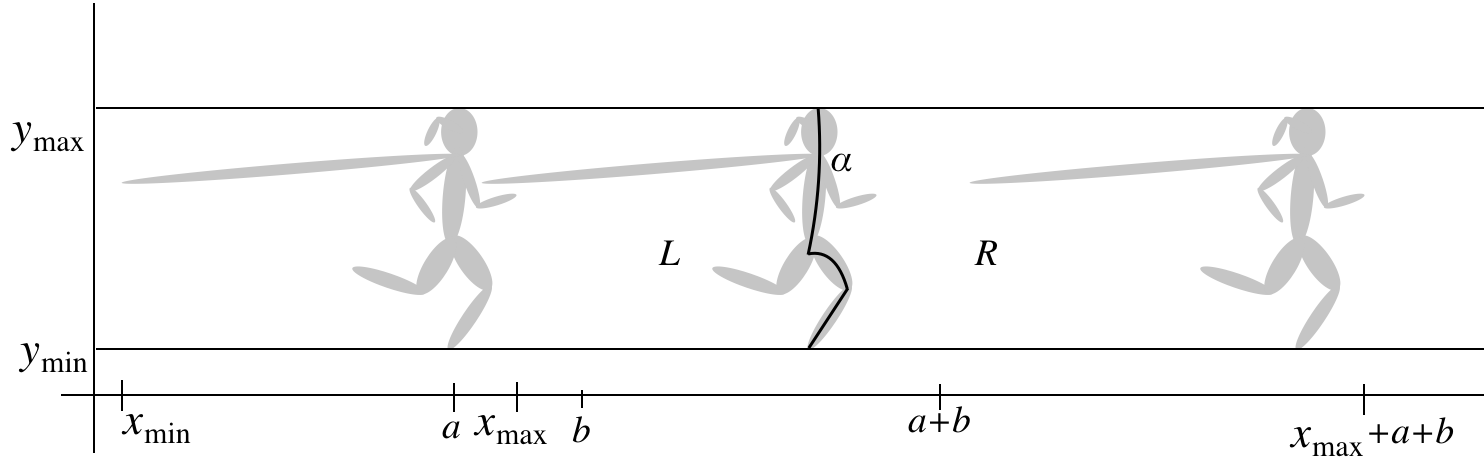}
\begin{quote}\caption{The construction in the proof of Theorem \ref{hopfthmi}. The shaded figures are from left to right $K^{\epsilon}_0, K^{\epsilon}_a, K^{\epsilon}_{a+b}$. \label{part1}} \end{quote}
\vspace{-2.5em}
\end{figure}

Let $y_{\text{min}}$ and $y_{\text{max}}$ be the minimum and maximum values of the $y$-coordinates of the  points in $K$. Similarly let $x_{\text{min}}$ and $x_{\text{max}}$ be the minimum and maximum values of the $x$-coordinates of the  points in $K$.    Since it is open and connected, $K_a^\epsilon$ contains a continuous curve with no self intersections that joins a point of $\Bbb R \times \{y_{\text{min}}\}$ to a point of $\Bbb R \times \{y_{\text{max}}\}$. This curve contains a subarc $\alpha$ with the same property that has no points outside $\Bbb R \times (y_{\text{min}},y_{\text{max}})$ other than its endpoints. The arc $\alpha$ divides the strip $\Bbb R \times [y_{\text{min}},y_{\text{max}}]$ into  a left half $L$ and a right half $R$ whose common boundary is~$\alpha$. The  points of $K_0$ at which the minimum $x$-coordinate $x_{\text{min}}$ is attained certainly lie in $L$ while the  points of $K_{a+b}$ at which the $x$-coordinate is $x_{\text{max}}+a+b$ lie in~$R$. But $K_0$ and $K_{a+b}$ cannot intersect $\alpha$. Thus all of $K_0$ lies in $L$ and all of $K_{a+b}$ lies in $R$. Therefore $K_0 \cap K_{a+b} = \emptyset$ because any intersection point would have to be in $\alpha$.
\end{proof}

\begin{corollary}
Let $S$ be the horizontal chord set of some continuous function \mbox{$f:[0,L]\to\mathbf{R}$}. Then its complement \mbox{$S^*=[0,\infty)\setminus S$} is open and additive.
\end{corollary}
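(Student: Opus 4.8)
The plan is to deduce the corollary as the function-graph special case of Theorem \ref{hopfthmi}, so that essentially all the work lies in checking that the hypotheses apply and that the two notions of ``horizontal chord set'' agree. First I would set $K = \{(x,f(x)) : x \in [0,L]\}$, the graph of $f$. Since $f$ is continuous and $[0,L]$ is compact and connected, $K$ is the image of $[0,L]$ under the continuous map $x \mapsto (x,f(x))$, hence $K$ is nonempty, compact, and connected; that is, $K$ is a continuum, so Theorem \ref{hopfthmi} applies to it.

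Next I would verify that $S(K)$, Hopf's set of horizontal chord lengths of $K$, coincides with the horizontal chord set $S$ of $f$. A number $s>0$ lies in $S(K)$ exactly when $K$ contains points $p$ and $p'$ with $p'-p=(s,0)$; writing $p=(a,f(a))$ and $p'=(a',f(a'))$, this says $a'-a=s$ and $f(a')=f(a)$, which is precisely the statement that $f$ has a horizontal chord of length $s$. Thus $S(K)$ and $S$ agree on the positive reals. The one place requiring care is the point $0$: the trivial chord from a point to itself puts $0\in S$, while Hopf defines $S^*(K)$ to contain only positive reals, so $0$ belongs to neither $S^*(K)$ nor $S^*=[0,\infty)\setminus S$. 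Consequently $S^*(K)=S^*$ as sets.

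Finally I would transport the two conclusions recorded for a continuum through this identification. By Theorem \ref{hopfthmi}, $S^*(K)$ is additive; and since $K$ is compact, $S(K)$ is closed (as noted in the definition), so its complement $S^*(K)$ is open. Because $S^*=S^*(K)$, it follows that $S^*$ is open and additive, which is exactly the assertion of the corollary.

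I do not anticipate a genuine obstacle, since this is merely the graph-of-a-function instance of a statement already established for arbitrary continua. The only points demanding attention are the bookkeeping at the endpoint $0$ and the verification that the graph really is a continuum. The latter is where the hypotheses matter: connectedness and compactness of $K$ rely on $f$ being \emph{continuous}, not merely integrable, so this is the place where the argument would break if continuity were weakened.
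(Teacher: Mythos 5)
Your proposal is correct and matches the paper's intent exactly: the paper states this corollary as an immediate specialization of Theorem \ref{hopfthmi} to the continuum $K = \{(x,f(x)) : x \in [0,L]\}$, with openness coming from the closedness of $S(K)$ noted in Hopf's definition. You have simply written out the routine verifications (the graph of a continuous function on $[0,L]$ is a continuum, the two chord sets agree, and the bookkeeping at $0$) that the paper leaves implicit.
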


\begin{theorem}[Hopf, Theorem II]\label{hopfthmii}
Let $S^*\subset (0,\infty)$ be nonempty, open and additive. 
Then its complement $S=[0,\infty)\setminus S^*$ is the horizontal chord set of some continuous function.
\end{theorem}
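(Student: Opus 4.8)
The aim is to produce a continuous $f$ whose graph realizes a horizontal chord of every length in $S$ and of no length in $S^*$; the special case of forbidding a single length is Proposition \ref{hc}, realized by the decreasing-sawtooth function of Figure \ref{sawtooth}, and the plan is to generalize that construction. First I would pin down the structure of $S^*$. A nonempty additive set contains every positive multiple of any of its members, hence is unbounded, and since $S^*$ is also open a short overlap argument shows it contains an entire ray $(M,\infty)$. Thus $S$ is bounded, and being closed it has a maximum $L:=\max S$; note $0\in S$ always. The degenerate cases are quick: if $S^*=\emptyset$ a constant function works, and if $\inf S^*=0$ then (again by the overlap argument) $S^*=(0,\infty)$, so $S=\{0\}$ and any strictly monotone $f$ suffices. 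So I may assume $0<\inf S^*$ and construct $f:[0,L]\to\mathbf{R}$ with $f(0)=f(L)$, which after a vertical shear is the same as prescribing $S$ as the chord set.

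The convenient reformulation is in terms of the shifted differences $g_s(x):=f(x)-f(x-s)$ on $[s,L]$: the graph has a chord of length $s$ precisely when $g_s$ vanishes, and has no chord of length $s$ precisely when $g_s$ has constant nonzero sign. So I must arrange $g_s<0$ throughout for every $s\in S^*$ with $s\le L$ (longer lengths are forbidden automatically, matching $(M,\infty)\subseteq S^*$), while forcing $g_s$ to have a zero for every $s\in S$. The engine driving the first requirement is the telescoping identity
\[
g_{a+b}(x)=\big(f(x)-f(x-a)\big)+\big(f(x-a)-f(x-a-b)\big)=g_a(x)+g_b(x-a),
\]
which shows that $\{s:g_s<0\}$ is closed under addition. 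This is precisely the additivity of Theorem \ref{hopfthmi} read in reverse, and it means I only need to force $g_s<0$ directly for the shortest forbidden lengths, namely the left endpoints of the components of $S^*$, and additivity will propagate negativity across all of $S^*$.

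For the construction itself I would take $f$ piecewise linear of decreasing-sawtooth type, as in Figure \ref{sawtooth} but with the tooth widths and heights dictated by the gap structure of $S^*$: the function drifts downward at a controlled average rate while oscillating, so that over any window whose width lies in $S^*$ the net change of $f$ is strictly negative (no chord), whereas over any window whose width lies in $S$ the graph returns to a previously attained height (a chord). The downward drift lets me arrange $f(0)=f(L)$ and automatically kills all chords longer than $L$. I would then verify the two requirements: constant sign of $g_s$ on the generating lengths (the rest of $S^*$ following from the identity above), and an actual zero of $g_s$ for each $s\in S$, with the boundary lengths $s\in\partial S^*=\partial S$ handled by continuity as limits of forbidden lengths.

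The main obstacle is tightness in both directions at once: the sawtooth must forbid everything in $S^*$ yet remain lean enough to still admit a chord of every length in $S$, including the delicate boundary lengths and any isolated components of $S$ (such as the point $2$ when $S=[0,1]\cup\{2\}$). A second, more technical obstacle is that $S^*$ can have infinitely many components, so the piecewise-linear function must be given by a convergent scheme with summable tooth sizes, and I would need a uniform-convergence argument to guarantee that the limit $f$ is continuous, that no forbidden chord appears in the limit, and that no required chord is lost. Proving these two containments --- not merely exhibiting a plausible sawtooth --- is where the real work lies.
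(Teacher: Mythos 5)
Your reformulation via the shifted differences $g_s(x)=f(x)-f(x-s)$ is sound, and the telescoping identity $g_{a+b}(x)=g_a(x)+g_b(x-a)$ is a nice way to see Theorem \ref{hopfthmi} in reverse; your structural observations ($S$ bounded, the degenerate cases) match Lemma \ref{fivepartlemma}. But there is a genuine gap, in two places. First, the claimed reduction --- ``I only need to force $g_s<0$ for the left endpoints of the components of $S^*$, and additivity propagates the rest'' --- does not work. The left endpoints of components of $S^*$ lie in $\partial S^*\subseteq S$, so $g_s$ must \emph{vanish} there, not be negative. And replacing them by nearby interior points does not help either, because $S^*$ is in general not the additive semigroup generated by any small set: if $l=\inf S^*$, then no element of $S^*\cap(l,2l]$ is a sum of two elements of $S^*$ (any such sum exceeds $2l$), and there can be further ``primitive'' components arbitrarily far out (e.g.\ $S^*$ containing $(1,1.1)$, $(2.5,2.6)$, and all their sums is open and additive). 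So the set of lengths you must handle directly is typically a union of intervals --- a continuum of constraints --- and nothing has been reduced.

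Second, even granting the reduction, the construction itself is deferred: a sawtooth ``with tooth widths and heights dictated by the gap structure of $S^*$'' that simultaneously keeps $g_s<0$ on that continuum of forbidden lengths \emph{and} retains a zero of $g_s$ for every $s\in S$ (including isolated points and boundary points) is precisely the content of the theorem, and you acknowledge this is ``where the real work lies.'' The missing idea is Hopf's explicit formula: take $h_S(x)=d(x,\partial S)$ on $\Int S$, $-d(x,\partial S)$ on $S^*$, and $0$ on $\partial S$. This single formula handles infinitely many components with no limiting or convergence scheme, and the verification is short: writing $\alpha(x),\beta(x)$ for the distances to the boundary points on either side of $x$, Lemma \ref{fivepartlemma} (\ref{boundary}) shows that a shift by $s^*\in S^*$ strictly \emph{decreases} both $\alpha$ and $\beta$ at points of $\Int S$ and strictly \emph{increases} both at points of $S^*$ (Lemma \ref{usinggreek}), so no two points at distance $s^*$ can have equal heights of the same sign; chords of every length $s\in S$ are then produced by the Intermediate Value Theorem applied to $x\mapsto h_S(x+s)-h_S(x)$ on $[0,\beta(s)]$. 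Your plan, as it stands, would still require inventing an equivalent of this construction and both verifications.
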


The function that Hopf constructs is $h_S(x)$, given in \eqref{h_S}. This function is defined on $[0,L]$, where $L$ is any number greater than sup $S$ (recall from \S \ref{sec:hcs} that $S$ is bounded).

Establishing the following properties of $S$ and $S^*$ is not difficult, so we omit the proofs. The reasoning is similar to the discussion of Example \ref{fourpointfour}; see \cite{hopf} for details.

\begin{lemma} \label{fivepartlemma} Let $S$ be a closed subset of $[0,\infty)$ whose complement $S^*$ is nonempty, open and additive. Then:
\begin{enumerate}[(a)]
\item $\overline{S^*}$ is additive.
\item $S$ is bounded. \label{bounded}
\item The infimum $l$ of $S^*$ is positive unless $S^* = (0,\infty).$\footnote{For instance, this is the case when $S$ is the chord set of a strictly increasing or strictly decreasing function.}
\item $S$ contains no interval  with  length $>l$. \label{lisbiggest}
\item If $s^* \in S^*$ and $s \in \pl S$, then $s+s^* \in S^*$. \label{boundary}
\end{enumerate}
\end{lemma}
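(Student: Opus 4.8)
The plan is to derive all five parts from two elementary features of $S^*$: since $S^*$ is open, every one of its points lies in a maximal open subinterval (a connected component) $(c,d)\subseteq S^*$; and since $S^*$ is additive, whenever $x\in S^*$ we may translate any subset of $S^*$ by $x$ and stay inside $S^*$. A recurring sub-step, which I would isolate first, is that a single bounded component $(c,d)$ generates a whole ray: the $n$-fold sums give $(nc,nd)\subseteq S^*$ for every $n\ge 1$, and consecutive intervals $(nc,nd)$ and $((n+1)c,(n+1)d)$ overlap as soon as $n>c/(d-c)$, so $\bigcup_{n\ge N}(nc,nd)=(Nc,\infty)\subseteq S^*$ for $N=\lfloor c/(d-c)\rfloor+1$. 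I will also use that $\partial S$ consists of limits of points of $S^*$.

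Parts (a) and (e) I would dispatch first, as they are pure continuity-plus-additivity arguments. For (a), take $a,b\in\overline{S^*}$ with $a_n\to a$, $b_n\to b$ and $a_n,b_n\in S^*$; then $a_n+b_n\in S^*$ and $a_n+b_n\to a+b$, so $a+b\in\overline{S^*}$. For (e), given $s\in\partial S$ and $s^*\in S^*$, choose $s_n^*\in S^*$ with $s_n^*\to s$; since $S^*$ is open there is $\delta>0$ with $(s^*-\delta,s^*+\delta)\subseteq S^*$, so for $n$ large $s^*+(s-s_n^*)\in(s^*-\delta,s^*+\delta)\subseteq S^*$, and adding $s_n^*$ gives $s+s^*\in S^*$.

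For (b) I would take any component $(c,d)$ of $S^*$ (assuming $S^*\neq\emptyset$; otherwise $S=[0,\infty)$, the excluded degenerate case): if it is a ray $(c,\infty)$ then $S\subseteq[0,c]$, and otherwise the ray-generation sub-step gives $(Nc,\infty)\subseteq S^*$, so $S\subseteq[0,Nc]$ is bounded. For (c), supposing $l=\inf S^*=0$, I would show every $t>0$ lies in $S^*$: pick a component $(u,v)\subseteq S^*$ with $u<t$ (possible since $\inf S^*=0<t$); because $\inf S^*=0$ there is a generator $g\in S^*$ with $0<g<v-u$; then the translates $(u,v)+kg=(u+kg,v+kg)$ overlap consecutively (as $g<v-u$) and chain up to $(u,\infty)\subseteq S^*$, whence $t\in S^*$. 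This forces $S^*=(0,\infty)$, establishing the contrapositive.

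Finally (d) is where the only genuine interaction between $S$ and $S^*$ is needed, and I expect it to be the \emph{main obstacle}. Assume $l>0$ (by (c), the alternative is $S^*=(0,\infty)$, $S=\{0\}$, which contains no interval) and suppose $S$ contains a maximal interval $[a,b]$ with $b-a>l$; choose $s^*\in S^*$ with $s^*<b-a$, possible since $\inf S^*=l<b-a$. If $a=0$, then $s^*\in(0,b)\subseteq S$ contradicts $s^*\in S^*$ directly. If $a>0$, then $a\in\partial S$, and part (e) gives $a+s^*\in S^*$, whereas $a+s^*\in(a,b)\subseteq S$ — again a contradiction. The delicate points throughout are purely bookkeeping: verifying the overlap condition for the generated intervals, and separating the degenerate cases ($S^*$ empty, $S^*=(0,\infty)$, components that are rays, and intervals of $S$ anchored at $0$ versus not). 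Because (d) relies on (e), I would prove the parts in the order (a), (e), (b), (c), (d) rather than as listed.
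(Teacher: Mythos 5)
The paper itself does not prove Lemma \ref{fivepartlemma}: it says the properties are ``not difficult'' and defers entirely to Hopf's paper \cite{hopf}, so there is no in-paper argument to compare yours against. Judged on its own, your proof is correct, and it supplies exactly what the paper leaves to the (German-language) reference. The ray-generation sub-step is sound: the $n$-fold sumset of a bounded component $(c,d)$ of $S^*$ is $(nc,nd)$, these lie in $S^*$ by induction on additivity, and consecutive ones overlap once $n>c/(d-c)$, giving a ray in $S^*$; this does all the work in (b) and (c). The derivation of (d) from (e) at the left endpoint of a maximal interval of $S$, split into the cases $a=0$ and $a>0$, is also sound. Your remark that (b) forces the exclusion of $S^*=\emptyset$ is not pedantry but necessary: the lemma as literally stated is false in that case, and it is implicitly outside the intended scope (Theorem \ref{hopfthmii} needs $L=\sup S<\infty$).

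Two small points to tighten, neither a genuine gap. First, your blanket claim that $\partial S$ consists of limits of points of $S^*$ fails at the single point $s=0$: the paper's convention takes the boundary in $\mathbf{R}$ (forced by $h_S(0)=0$ in the construction, e.g.\ Example \ref{fourpointfour}), so $0\in\partial S$ always, yet $0\notin\overline{S^*}$ whenever $\inf S^*>0$ (e.g.\ $S=[0,1]$, $S^*=(1,\infty)$). Your proof of (e) silently assumes $s\in\overline{S^*}$, so you should add the one-line case $s=0$, where the conclusion $s+s^*=s^*\in S^*$ is trivial; note that your use of (e) inside (d) is unaffected, since there you invoke it only with $a>0$, and for positive boundary points membership in $\overline{S^*}$ does hold. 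Second, in (d) the assertions that the component of $S$ containing the long interval is a closed bounded interval $[a,b]$ (this uses that $S$ is closed plus part (b)) and that $a>0$ implies $a\in\partial S$ deserve a sentence: if $a$ were an interior point of $S$, the component would extend to the left of $a$, contradicting maximality. With those two sentences added, the proof is complete and self-contained.
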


 We can now establish three more properties of $S$ and $S^*$:
 
 \begin{lemma} \label{usinggreek}
 \begin{enumerate}[(a)]
 \item \label{alphabetains} For all $s\in S, \alpha(s)\in S$ and $\beta(s)\in S$.
 \item \label{insint}
Suppose $s^* \in S^*$ and both $x$ and $s^* + x$ are in $\Int S$. Then $$
 \alpha(s^* + x) < \alpha(x) \quad\text{and}\quad \beta(s^* + x) < \beta(x).
 $$
\item \label{insstar} Suppose $s^* \in S^*$ and both $x$ and $s^* + x$ are in $S^*$. Then  $$
\alpha(s^* + x) > \alpha(x) \quad\text{and}\quad \beta(s^* + x) > \beta(x).
 $$
 \end{enumerate}
 \end{lemma}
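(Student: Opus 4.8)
The plan is to prove all three parts by tracking how the neighboring boundary points $a(\cdot)$ and $b(\cdot)$ move when a point is translated by an element $s^*\in S^*$, using only Lemma~\ref{fivepartlemma}(\ref{boundary}) (that $\partial S + S^*\subseteq S^*$) together with the additivity of $S^*$ itself. Throughout I would lean on the structural facts recorded after the definition: for $s\in\Int S$ the closed interval $[a(s),b(s)]$ lies in $S$ (its interior is the maximal subinterval of $\Int S$ through $s$, and its endpoints lie in $\partial S$), and symmetrically, for $x\in S^*$ the open interval $(a(x),b(x))$ lies in $S^*$.

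For part (\ref{alphabetains}) I would first dispose of $s\in\partial S$, where $a(s)=b(s)=s$ forces $\alpha(s)=\beta(s)=0\in S$ (and $0\in S$ since $S^*\subseteq(0,\infty)$). For $s\in\Int S$ I would argue by contradiction. If $\alpha(s)=s-a(s)\in S^*$, then since $a(s)\in\partial S$, Lemma~\ref{fivepartlemma}(\ref{boundary}) puts $a(s)+\alpha(s)=s$ in $S^*$, contradicting $s\in S$. For $\beta(s)=b(s)-s$, the same lemma applied to the boundary point $a(s)$ would put $a(s)+\beta(s)=a(s)+b(s)-s$ in $S^*$; but $a(s)+b(s)-s$ is the reflection of $s$ across the midpoint of $[a(s),b(s)]$, hence it lies in $[a(s),b(s)]\subseteq S$, again a contradiction.

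For parts (\ref{insint}) and (\ref{insstar}) I would first reformulate the inequalities in terms of $a$ and $b$: the claim $\alpha(s^*+x)<\alpha(x)$ is equivalent to $a(s^*+x)>a(x)+s^*$, and $\beta(s^*+x)<\beta(x)$ to $b(s^*+x)<b(x)+s^*$ (with both inequalities reversed for (\ref{insstar})). In every case the shifted boundary points $a(x)+s^*$ and $b(x)+s^*$ land in $S^*$, by Lemma~\ref{fivepartlemma}(\ref{boundary}). For (\ref{insint}), where $s^*+x\in\Int S$, I would show that the point $a(x)+s^*\in S^*$ cannot lie in $[a(s^*+x),s^*+x]$, because the half-open part $[a(s^*+x),s^*+x)\subseteq[a(s^*+x),b(s^*+x))\subseteq S$ and $a(x)+s^*<s^*+x$; this forces $a(s^*+x)>a(x)+s^*$, and the symmetric computation handles $b$. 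For (\ref{insstar}), where $s^*+x\in S^*$, I would instead use genuine additivity of $S^*$: for every $t\in(a(x),x)\subseteq S^*$ one has $t+s^*\in S^*$, so the whole closed interval $[a(x)+s^*,\,s^*+x]$ lies in $S^*$ and is therefore contained in the maximal $S^*$-interval $(a(s^*+x),b(s^*+x))$ through $s^*+x$; since $a(x)+s^*$ is then an interior point of that open interval, $a(s^*+x)<a(x)+s^*$, and symmetrically $b(s^*+x)>b(x)+s^*$.

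I expect the main obstacle to be bookkeeping rather than any single deep step: one must keep straight which structural fact is in play — property (\ref{boundary}) when the translate stays in $\Int S$, and true additivity of $S^*$ when it stays in $S^*$ — and must justify the \emph{strict} inequalities. Strictness is the one point needing care: it is automatic in (\ref{insstar}), since the relevant shifted point lies interior to an open $S^*$-interval, whereas in (\ref{insint}) it requires the observation that a point of $S^*$ can neither equal the boundary point $a(s^*+x)\in\partial S$ nor lie in $(a(s^*+x),b(s^*+x))\subseteq\Int S$.
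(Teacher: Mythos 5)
Your proposal is correct, and for parts (\ref{insint}) and (\ref{insstar}) it is essentially the paper's argument with the details filled in: the paper also starts from Lemma~\ref{fivepartlemma}~(\ref{boundary}) to place $s^*+a(x)$ and $s^*+b(x)$ in $S^*$, and then simply asserts that ``from the definitions'' one gets the two chains $s^*+a(x) < a(s^*+x) < b(s^*+x) < s^*+b(x)$ (case (\ref{insint})) and $a(s^*+x) < s^*+a(x) < s^*+b(x) < b(s^*+x)$ (case (\ref{insstar})), from which the $\alpha,\beta$ inequalities follow by subtraction. Your write-up supplies exactly the justification the paper suppresses, and usefully isolates the asymmetry it glosses over: case (\ref{insint}) needs only property (\ref{boundary}) plus the fact that $[a(s^*+x),b(s^*+x)]\subseteq S$, whereas case (\ref{insstar}) genuinely requires additivity of $S^*$ applied pointwise along $(a(x),x)$ to see that the whole shifted interval stays inside one component of $S^*$ --- two points of an open set need not lie in the same component, so this step cannot come ``from the definitions'' alone. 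Where you genuinely diverge is part (\ref{alphabetains}): the paper deduces it from Lemma~\ref{fivepartlemma}~(\ref{lisbiggest}), since $\alpha(s),\beta(s)\le b(s)-a(s)\le l=\inf S^*$ and $[0,l]$ is disjoint from the open set $S^*$; you instead argue by contradiction from Lemma~\ref{fivepartlemma}~(\ref{boundary}), with a reflection trick ($a(s)+\beta(s)=a(s)+b(s)-s\in[a(s),b(s)]\subseteq S$) to handle $\beta$. Your route has the merit of running all three parts through the single property (\ref{boundary}), at the cost of the slightly less transparent reflection step; the paper's route for (a) is shorter but draws on a different part of Lemma~\ref{fivepartlemma}.
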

 
 \begin{proof} 
  \begin{enumerate}[(a)] 
  \item This follows from Lemma \ref{fivepartlemma} (\ref{lisbiggest}).
  
 By Lemma \ref{fivepartlemma} (\ref{boundary}), both $s^* + a(x)$ and $s^* + b(x)$ belong to $S^*$, which is open. From the definitions of $a(s^* + x)$ and  $b(s^* + x)$, we can obtain the inequalities
  \item   $s^* + a(x) < a(s^* + x) < b(s^* + x) < s^* + b(x)$, and
  \item $a(s^* + x) < s^* + a(x) <  s^* + b(x) <  b(s^* + x).$
 \end{enumerate}
 The results follow from each of these, respectively.
 \end{proof}

Now we can establish the desired results about the function $h_S(x)$:

\begin{proposition}
\begin{enumerate}[(a)]
\item The function $h_S$ has a  horizontal chord of length $s$ for each $s \in S$.
\item The length of a horizontal chord of the graph of $h_S$ lies in $S$.
\end{enumerate}
\end{proposition}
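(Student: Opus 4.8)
The plan is to recast each statement in terms of the auxiliary function $\phi_s(x) = h_S(x+s) - h_S(x)$ on $[0,L-s]$: the graph of $h_S$ has a horizontal chord of length $s$ exactly when $\phi_s$ has a zero. Throughout I would use three facts that are immediate from the definition of $h_S$ and the preceding lemmas: $0,L\in\partial S$ so that $h_S(0)=h_S(L)=0$; $h_S>0$ strictly on $\Int S$ (both $\alpha(x),\beta(x)>0$ there); and $h_S<0$ strictly on $S^*$.

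For part (a), fix $s\in S$. If $s\in\partial S$ then $h_S(s)=0=h_S(0)$, so $\{0,s\}$ is already a horizontal chord of length $s$. If instead $s\in\Int S$, I would apply the Intermediate Value Theorem to the continuous function $\phi_s$ on $[0,\beta(s)]$ (this lies in the domain, since $\beta(s)+s=b(s)\le L$). At the left endpoint $\phi_s(0)=h_S(s)>0$. At the right endpoint $\phi_s(\beta(s))=h_S(b(s))-h_S(\beta(s))=-h_S(\beta(s))$, and since $\beta(s)\in S$ by Lemma \ref{usinggreek} (\ref{alphabetains}) we get $h_S(\beta(s))\ge 0$, hence $\phi_s(\beta(s))\le 0$. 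The IVT then yields the desired zero. The only real content is the choice of the evaluation point $\beta(s)$ together with Lemma \ref{usinggreek} (\ref{alphabetains}); the rest is bookkeeping.

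For part (b), I would argue by contradiction, supposing a horizontal chord of length $s^*\in S^*$, i.e.\ $h_S(x)=h_S(x+s^*)$ with $x,x+s^*\in[0,L]$, and split into cases by the location of the two endpoints. If both lie in $\Int S$, Lemma \ref{usinggreek} (\ref{insint}) makes both $\alpha$ and $\beta$ strictly decrease, hence $\min(\alpha,\beta)$ strictly decreases and $h_S(x+s^*)<h_S(x)$; if both lie in $S^*$, Lemma \ref{usinggreek} (\ref{insstar}) makes both strictly increase, so $\min(\alpha,\beta)$ strictly increases and again $h_S(x+s^*)<h_S(x)$. Either way the assumed equality is impossible. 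In the mixed case (one endpoint in $\Int S$, the other in $S^*$) the two heights have strictly opposite signs and so cannot be equal.

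The main obstacle is the remaining boundary case, where one endpoint lies in $\partial S$. There the equality $h_S(x)=h_S(x+s^*)$ transfers the height $0$ of the $\partial S$ endpoint to the other endpoint, forcing \emph{both} $x$ and $x+s^*$ into $\partial S$. But $x\in\partial S$ and $s^*\in S^*$ give $x+s^*\in S^*$ by Lemma \ref{fivepartlemma} (\ref{boundary}), contradicting $x+s^*\in\partial S\subset S$. This closes the last case, so no chord has length in $S^*$; since every chord length lies in $[0,L]$ and $[0,L]\setminus S^*=S$, part (b) follows. I expect the only fussy points to be the boundary-case bookkeeping and the routine check that the coordinatewise monotonicity of $(\alpha,\beta)$ descends to $\min(\alpha,\beta)$ and hence to $h_S$; neither is deep.
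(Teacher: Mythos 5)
Your proof is correct and takes essentially the same route as the paper's: part (a) is the same Intermediate Value Theorem argument on $[0,\beta(s)]$ hinging on Lemma \ref{usinggreek} (\ref{alphabetains}), and part (b) is the same case analysis using Lemma \ref{usinggreek} (\ref{insint}), (\ref{insstar}) and Lemma \ref{fivepartlemma} (\ref{boundary}), with your ``endpoint location'' cases matching the paper's ``chord above/on/below the axis'' cases since $h_S$ is positive on $\Int S$, zero on $\partial S$, and negative on $S^*$.
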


\begin{proof} (a) Note that $h_S: [0,L] \to \Bbb R$, defined in \eqref{h_S}, is a continuous function such that $h_S(x) \geq 0$ if $x \in S$ and $h_S(x) = 0$ if $x \in \pl S$. 
Since $b(s) \in \pl S$, we have \mbox{$h_S(b(s)) = 0$}. Translate the piece of the graph of $h_S$ joining $(s,h_S(s))$ to $(b(s),0)$ by $(-s,0)$. This produces an arc $A$ from $(0,h_S(s))$ to  $(\beta(s),0)$. Now observe that \mbox{$h_S(s) \geq 0 = h_S(0)$} and $0 \leq h_S(\beta(s))$, since $\beta(s) \in S$ by Lemma \ref{usinggreek} (\ref{alphabetains}). It follows that the arc $A$ crosses the graph of $h_S$ at a point $(x,h_S(x))$ for some $x \in [0,\beta(s)]$. More formally, we have
 $$
 h_S(0+s) = h_S(s) \geq 0 = h_S(0) \quad\text{and}\quad h_S(\beta(s) + s) = h_S(b(s)) = 0 \leq h_S(\beta(s)),
 $$
and it follows from the continuity of $h_S$ and the Intermediate Value Theorem that there is $x \in [0,\beta(s)]$ such that $h_S(x+s) = h_S(x)$. Since \mbox{$h_S(x) = h_S(x+s)$}, the points $(x,h_S(x))$ and $(x+s,h_S(x+s))$ are joined by a horizontal chord of length $s$.

 (b) It is evident that $h_S(x) > 0$ if $x \in \Int S$ and $h_S(x) < 0$ if $x \in S^*$. There are three cases to consider. 

\noindent {\bf Case 1:} The chord lies on the $x$-axis. In this case it must join two points $(s_1,0)$ and $(s_2,0)$ with both $s_1 \in \pl S$ and $s_2 \in \pl S$. By Lemma \ref{fivepartlemma} (\ref{boundary}), this is possible only if $|s_1- s_2| \notin S^*$.

\noindent {\bf Case 2:} The chord lies above the $x$-axis. In this case it must join two points $(s_1,y)$ and $(s_2,y)$ with both $s_1 \in \Int S$ and $s_2 \in \Int S$. We may assume that $s_1 \leq s_2$. It is immediate from Lemma \ref{usinggreek} (\ref{insint}) that if $s_2 - s_1 \in S^*$, then $\alpha(s_1) > \alpha(s_2)$ and $\beta(s_1) > \beta(s_2)$. But this is impossible if $h_S(s_1) = h_S(s_2)$.

\noindent {\bf Case 3:} The chord lies below the $x$-axis. In this case it must join two points $(s^*_1,y)$ and $(s^*_2,y)$ with both $s^*_1 \in S^*$ and $s^*_2 \in S^*$. We may assume that \mbox{$s^*_1 \leq s^*_2$}. It is immediate from Lemma \ref{usinggreek} (\ref{insstar}) that if $s^*_2 - s^*_1 \in S^*$, then \mbox{$\alpha(s^*_1) < \alpha(s^*_2)$} and $\beta(s^*_1) < \beta(s^*_2)$. But this is impossible if \mbox{$h_S(s^*_1) = h_S(s^*_2)$}.
\end{proof}

It is also interesting to note that the proof shows that $h_S(x+s^*) < h_S(x)$ whenever $s^* \in S^*$ and $0 \leq x < x+s^* \leq L$.

\end{document}